\theoremstyle{theorem}
\newtheorem{theo}{Theorem}[section]
\newtheorem{lemma}[theo]{Lemma}
\theoremstyle{definition}
\newtheorem{definition}[theo]{Definition}
\theoremstyle{remark}
\newtheorem{rem}[theo]{Remark}
\numberwithin{equation}{section}
\DeclareMathOperator\R{\mathbb{R}}
\DeclareMathOperator{\Tr}{Tr}
\DeclareMathOperator{\id}{id}
\newcommand{\norm}[1]{\left\lVert#1\right\rVert}
\newcommand*\prob{\mathop{}\!\mathscr{P}}
\newcommand*\diff{\mathop{}\!\mathrm{d}}
\newcommand{\mres}{\mathbin{\vrule height 1.6ex depth 0pt width
0.13ex\vrule height 0.13ex depth 0pt width 1.3ex}} 
\begin{document}
\author{Thibault Caillet\thanks{\scriptsize Institut Camille Jordan, Universit\'e Claude Bernard - Lyon 1; 43 boulevard du 11 novembre 1918, 69622 Villeurbanne cedex
\texttt{caillet@math.univ-lyon1.fr }.}}
\title{The five gradients inequality for non quadratic costs}
\maketitle

\begin{abstract} We give a proof of the "five gradients inequality" of Optimal Transportation Theory for general costs of the form $c(x,y)=h(x-y)$ where $h$ is a $C^1$ strictly convex radially symmetric function. \end{abstract}

\section{Introduction}

While its name was popularized later on, the five gradients inequality was introduced in \cite{BVEstimates} as a way to derive estimates on the gradient of some recurring variational problems in Optimal Transport involving the Wasserstein distance $W_2$. In particular the authors showed that the inequality can be used in the celebrated JKO scheme
\begin{equation*}
\label{varproblem}
    \varrho^\tau _{k+1} \in \text{argmin}_\varrho  \int f(\varrho)\diff x + \frac{W^2 _2 (\varrho, \varrho^\tau _{k})}{2 \tau},
\end{equation*}
to derive BV estimates that can be iterated along the scheme uniformly in $\tau$, and therefore pass to the limit PDE  $\partial_t \varrho - \nabla \cdot \left( \varrho f''(\varrho) \nabla \varrho \right) =0$, yielding that the BV norm of the solution is nonincreasing in time. In the same paper, the authors also use the inequality to prove BV estimates for the Wasserstein projection of a measure with
BV density on the set of measures with density bounded by another given BV function. This result is then used in \cite{mutuallysingular} to find bounds on the perimeters of solutions of some variational problems involving mutually singular measures. The five gradients inequality has also been used in \cite{FokkerPlanckLp} to derive Sobolev estimates for the solutions of the JKO scheme for the Fokker-Planck equation. One only needs to prove that the five gradients inequality holds in a more general setting to generalize most of these results beyond the $W_2$ case. It is now folk-lore that the inequality is also true for the distance $W_p$ for $p>1$ and more general cost functions, yet a full proof has not been available until now. In this paper we generalize the proof given in \cite{BVEstimates} to the case where the cost $c$ is of the form $c(x,y) = h(x-y)$ where $h$ is a strictly convex radially symmetric $C^1$ function. The inequality reads as follows:
\begin{theo}
\label{5grads}
Let $\Omega \subset \R^d$ be bounded and convex with non-empty interior, $\varrho,g \in W^{1,1}(\Omega)$ be two probability densities, $h\in C^1(\R^d)$ a radially symmetric strictly convex function and $H \in C^1(\R^d \backslash \{0\})$ be a radially symmetric convex function, then the following inequality holds 
\begin{equation}
    \int_{\Omega} \big( \nabla \varrho \cdot \nabla H ( \nabla \varphi) + \nabla g \cdot \nabla H (\nabla \psi) \big) \diff x \geq 0,
\end{equation}
where $(\varphi,\psi)$ is a choice of Kantorovich potentials for the optimal transport problem between $\varrho$ and $g$ for the transport cost given by $h$, with the convention that $\nabla H (0) = 0$.
\end{theo}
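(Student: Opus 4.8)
The plan is to reduce by approximation to a smooth setting and then to run a change of variables and a Monge--Ampère computation, in the spirit of \cite{BVEstimates}. I would first replace $h$ by smooth, uniformly convex, radially symmetric costs $h_n$ with $h_n\to h$ and $\nabla h_n\to\nabla h$ locally uniformly; $H$ by smooth radially symmetric convex functions $H_n$ with $\nabla H_n\to\nabla H$ locally uniformly on $\R^d\setminus\{0\}$ and $\nabla H_n(0)=0$; and $\varrho,g$ by smooth densities $\varrho_n,g_n$ bounded below by positive constants on $\bar\Omega$ with $\varrho_n\to\varrho$, $g_n\to g$ in $W^{1,1}(\Omega)$. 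The potentials $\varphi_n,\psi_n$ are then uniformly Lipschitz and semiconcave, so along a subsequence $\nabla\varphi_n\to\nabla\varphi$ and $\nabla\psi_n\to\nabla\psi$ a.e.; since $\nabla H_n(\nabla\varphi_n),\nabla H_n(\nabla\psi_n)$ are uniformly bounded and converge a.e.\ while $\nabla\varrho_n\to\nabla\varrho$, $\nabla g_n\to\nabla g$ in $L^1$ (with boundary traces converging), the two integrals pass to the limit, so it suffices to treat smooth data, which I assume henceforth.

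Integration by parts gives
\begin{align*}
\int_\Omega\big(\nabla\varrho\cdot\nabla H(\nabla\varphi)+\nabla g\cdot\nabla H(\nabla\psi)\big)\diff x
&=\int_{\partial\Omega}\big(\varrho\,\nabla H(\nabla\varphi)+g\,\nabla H(\nabla\psi)\big)\cdot\nu_\Omega\,\diff\mathcal H^{d-1}\\
&\quad-\int_\Omega\big(\varrho\,\nabla\!\cdot\!\nabla H(\nabla\varphi)+g\,\nabla\!\cdot\!\nabla H(\nabla\psi)\big)\diff x.
\end{align*}
Let $T,S$ be the optimal maps from $\varrho$ to $g$ and from $g$ to $\varrho$, so that a.e.\ $\nabla\varphi=\nabla h(\mathrm{id}-T)$ and $\nabla\psi=\nabla h(\mathrm{id}-S)$. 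Since $T,S$ take values in $\bar\Omega$ and $\Omega$ is convex, $\langle x-T(x),\nu_\Omega(x)\rangle\ge0$ and $\langle y-S(y),\nu_\Omega(y)\rangle\ge0$ on $\partial\Omega$; because $\nabla h$ and $\nabla H$ are radial and point outward, this forces $\langle\nabla H(\nabla\varphi),\nu_\Omega\rangle\ge0$ and $\langle\nabla H(\nabla\psi),\nu_\Omega\rangle\ge0$ on $\partial\Omega$, so the boundary integral is $\ge0$; this is where the convexity of $\Omega$ enters.

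For the second integral, change variables $y=T(x)$ in the $g$-part: using $\diff y=\det DT\,\diff x$ and the Monge--Ampère equation $\varrho=(g\circ T)\det DT$ it equals $\int_\Omega\varrho(x)\big((\nabla\!\cdot\!\nabla H(\nabla\varphi))(x)+[\nabla\!\cdot\!\nabla H(\nabla\psi)](T(x))\big)\diff x$. Differentiating the identities $\nabla\varphi(x)=\nabla h(x-T(x))$ and $\nabla\psi(T(x))=-\nabla\varphi(x)$ gives, with $Q:=D^2h(x-T(x))\succ0$ and $P:=D^2\varphi(x)$, the relations $I-DT=Q^{-1}P$ and $D^2\psi(T(x))=-P(DT)^{-1}$; hence (using that $D^2H$ is even, as $H$ is radial)
\[
(\nabla\!\cdot\!\nabla H(\nabla\varphi))+[\nabla\!\cdot\!\nabla H(\nabla\psi)]\circ T=\Tr\big(D^2H(\nabla\varphi)P\big)-\Tr\big(D^2H(\nabla\varphi)P(DT)^{-1}\big).
\]
Writing $\widetilde P:=Q^{-1/2}PQ^{-1/2}$, so $DT$ is conjugate to $I-\widetilde P\succ0$, the right-hand side collapses to $-\Tr\big(D^2H(\nabla\varphi)\,Q^{1/2}\widetilde P(I-\widetilde P)^{-1}\widetilde P\,Q^{1/2}\big)$, which is $\le0$ since $Q^{1/2}\widetilde P(I-\widetilde P)^{-1}\widetilde PQ^{1/2}\succeq0$ and $D^2H(\nabla\varphi)\succeq0$ by convexity of $H$. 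Thus the second integral above is $\le0$, and since the boundary integral is $\ge0$ the desired inequality follows. (On $\{\nabla\varphi=0\}$, where $\nabla H$ is defined by convention, one has $D^2\varphi=0$ a.e., so this set contributes nothing; likewise for $\psi$.)

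The main obstacle is not this algebra but its justification at the regularity actually available. For a general strictly convex $h$ the Ma--Trudinger--Wang condition may fail, so Caffarelli-type regularity is unavailable and one cannot assume $\varphi\in C^2$ or $T\in C^1$, even for the smooth approximants $h_n$. The computation above must then be carried out with $\varphi$ merely semiconcave --- so $\nabla\varphi\in BV$, $D^2\varphi$ a matrix-valued measure, $T\in BV$ --- using the a.e.\ Jacobian form of the Monge--Ampère equation and checking that the singular, nonpositive part of $D^2\varphi$ does not spoil the sign. Establishing this, together with the convergences in the reduction step, is the technical heart of the proof.
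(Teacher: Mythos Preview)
Your boundary argument and overall integration-by-parts structure match the paper. The gap is exactly the one you flag in your final paragraph, and it is not a technicality you can defer: your interior computation uses $D^2\varphi$, $D^2\psi$, $DT$, $(DT)^{-1}$ and $Q=D^2h$ as honest matrices, which requires $\varphi\in C^2$ and $T$ a diffeomorphism. As you note, for the approximants $h_n$ (even smooth, uniformly convex, radial) the MTW condition can fail, so no Caffarelli-type regularity is available and these objects simply do not exist in the needed sense. Writing ``the computation above must then be carried out with $\varphi$ merely semiconcave \dots\ establishing this is the technical heart of the proof'' is an accurate diagnosis, but it means the proof as written is a sketch of the smooth case plus an acknowledgement that the actual theorem remains to be proved.

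The paper's point is precisely how to do this without ever assuming $C^2$ potentials. For a cost satisfying (H1) the $c$-concave potentials are semiconcave, so by Alexandroff's theorem $\nabla\varphi,\nabla\psi\in BV$ and are twice differentiable a.e. The distributional divergence $\nabla\!\cdot[\nabla H(\nabla\varphi)]$ is then computed via the BV chain rule; semiconcavity forces the Cantor and jump parts to have the correct sign, leaving $\Tr(D^2H(\nabla\varphi)D^2_{ac}\varphi)$. For the absolutely continuous part one does \emph{not} need your matrix identity with $(DT)^{-1}$: at Alexandroff points the function $v\mapsto\varphi(x+v)+\psi(T(x)-v)$ has a maximum at $v=0$, which directly gives the symmetric inequality $D^2_{ac}\varphi(x)+D^2_{ac}\psi(T(x))\le 0$. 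Combined with $-\nabla\psi(T(x))=\nabla\varphi(x)$ and $D^2H\succeq0$, this yields $\Tr\big(D^2H(\nabla\varphi)[D^2_{ac}\varphi+D^2_{ac}\psi\circ T]\big)\le 0$ without inverting anything. Your algebra with $\widetilde P(I-\widetilde P)^{-1}\widetilde P$ is correct when everything is smooth, but it is both unjustified here and more than is needed; the pointwise second-order optimality inequality is the substitute that survives at the semiconcave level. The convergence step (a.e.\ convergence of $\nabla\varphi_n$) also needs an argument --- the paper obtains it from stability of optimal plans, convergence in measure of $T_n$, and inverting $\nabla h_n$ --- rather than from semiconcavity alone.
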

A particular case is the one where $h(z)=|z|^p$ for $p>1$, generalizing the inequality to the $W_p$ case. Following the strategy established in \cite{BVEstimates}, the generalized inequality for example implies that the BV norm of the solution of $\partial_t \varrho -\Delta_q  (g(\varrho))=0$ (with $q=\frac{p}{p-1}$, and $g$ nondecreasing) decreases in time. Indeed, this nonlinear PDE can be seen as the limit of a JKO-like scheme 
\begin{equation*}
    \varrho_{k+1} \in \textnormal{argmin}_\varrho \frac{W^p _p (\varrho, \varrho_k)}{p \tau^{p-1}} + \int_\Omega f(\varrho_k) \diff x.
\end{equation*}
by choosing $g'(\varrho)=\varrho^{p-1} f''(\varrho)$ with suitable assumptions on $f$ so that the JKO scheme indeed converges (see e.g. \cite{Agueh}, \cite{AmbrosioGigliSavare},  \cite{Otto}).

To justify computations that involve second derivatives of Kantorovich potentials, the original proof made use of the well known Caffarelli regularity theory available for the cost $h(x-y)=|x-y|^2$. Since the works of Ma, Trudinger, Wang \cite{MTW} and Loeper \cite{Loeper}, sufficient and necessary conditions on the cost $c$ to guarantee the existence of smooth potentials are known, and in these cases one could reproduce the proof given in \cite{BVEstimates}. Unfornutately, these conditions for regularity do not cover the cases where, for example, $h(z)=|z|^p$ for $p\neq 2$. Therefore in the sequel we shall instead approximate the cost with semiconcave cost functions, and use the fact that Kantovorich potentials inherit this semiconcavity, along with Alexandroff's theorem (see e.g. \cite{EvansGariepy}) :
\begin{theo}[Alexandroff's theorem]
\label{Alexandroff}
Let $u$ be a semiconcave function on an open bounded set $A \subset \R^d$; then $u$ is twice differentiable a.e., meaning for a.e. $x_0 \in A$, there exists $p_{x_0} \in \R ^d $ and a symmetric matrix $B_{x_0}$ such that
\begin{equation*}
    \lim_{x\xrightarrow{} x_0} \frac{u(x) - u(x_0) - p_{x_0} \cdot (x-x_0) +B_{x_0}(x-x_0) \cdot (x-x_0)}{|x-x_0|^2} =0.
\end{equation*}
Moreover, the gradient of $u$, defined for almost every $x_0$ in $A$ is BV and the absolutely continuous part of the second derivative $D^2 _{ac} u(x_0)$ is given by $B_{x_0}$.
\end{theo}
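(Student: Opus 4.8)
The plan is to reduce to the convex case and then to combine the classical fact that the distributional Hessian of a convex function is a matrix-valued Radon measure with a blow-up argument that upgrades measure-theoretic (averaged) second-order information into the pointwise expansion. Since $u$ is semiconcave on $A$, there is a constant $C>0$ such that $v:=\tfrac{C}{2}|x|^2-u$ is convex on $A$; because $\tfrac{C}{2}|x|^2$ is smooth with Hessian $C\,\mathrm{Id}$, the function $u$ is twice differentiable at a point if and only if $v$ is, with $D^2_{ac}u=C\,\mathrm{Id}-D^2_{ac}v$, and $\nabla u\in BV \iff \nabla v\in BV$. Hence I would assume henceforth that $u$ is convex. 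Being convex on an open set, $u$ is locally Lipschitz, and therefore, by Rademacher's theorem, differentiable at almost every $x_0$, which supplies the vector $p_{x_0}=\nabla u(x_0)$ of the statement.

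Next I would establish the $BV$ regularity of $\nabla u$ together with the Hessian measure. For every unit vector $\xi$ the second distributional directional derivative $\partial^2_{\xi}u$ is a nonnegative distribution, hence a nonnegative Radon measure, by one-dimensional convexity of $u$ along the lines directed by $\xi$; polarizing through $\partial_{ij}u=\tfrac12\big(\partial^2_{e_i+e_j}u-\partial^2_{e_i}u-\partial^2_{e_j}u\big)$ then shows that each second distributional derivative is a signed Radon measure. Consequently $\nabla u\in BV_{loc}(A)$ and $\mu:=D^2u$ is a symmetric, positive-semidefinite, matrix-valued Radon measure, which I would decompose as $\mu=B(x)\,\mathcal{L}^d+\mu^s$ with $B=D^2_{ac}u$ and $\mu^s\perp\mathcal{L}^d$. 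The "moreover" part of the statement is then already contained in this decomposition, once the pointwise expansion below is proven with $B_{x_0}=B(x_0)$.

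The heart of the argument is the pointwise second-order expansion, and this is the step I expect to be the main obstacle: the $BV$/Besicovitch differentiation theorems only deliver averaged (approximate) control of $\nabla u$, whereas the statement demands a genuine limit, so convexity must be exploited to convert averaged estimates into pointwise ones. I would select a \emph{good point} $x_0$ — a full-measure set, by Lebesgue--Besicovitch differentiation and Rademacher — at which simultaneously $u$ is differentiable, $x_0$ is a Lebesgue point of $B$ with value $B_{x_0}$, and the singular part has vanishing density $r^{-d}|\mu^s|(B_r(x_0))\to0$. After subtracting the affine part I may assume $u(x_0)=0$ and $\nabla u(x_0)=0$, and I would study the convex blow-ups $u_r(y):=r^{-2}\,u(x_0+ry)$. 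A direct rescaling shows that the Hessian measure of $u_r$ on $B_1$ is the dilation of $\mu$, of uniformly bounded mass and converging weakly-$*$ to the constant measure $B_{x_0}\,\mathcal{L}^d$ by the Lebesgue-point and vanishing-density conditions.

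Finally I would extract the limit by compactness. Convexity forces $u_r\geq0$ and, via monotonicity of the gradient together with the bounded Hessian mass, yields uniform Lipschitz and $L^\infty$ bounds on $B_{1/2}$; hence $\{u_r\}$ is precompact in $C^0_{loc}$. Any subsequential limit is convex with constant Hessian $B_{x_0}\,\mathcal{L}^d$ and with vanishing value and gradient at the origin, which forces it to be the single quadratic $y\mapsto\tfrac12 B_{x_0}\,y\cdot y$. By uniqueness of this limit, every subsequence admits a further subsequence converging to it, so the whole family converges: $u_r\to\tfrac12 B_{x_0}\,y\cdot y$ locally uniformly. Unwinding the rescaling gives $u(x)-\tfrac12 B_{x_0}(x-x_0)\cdot(x-x_0)=o(|x-x_0|^2)$, which is precisely the claimed second-order expansion with Hessian $B_{x_0}=D^2_{ac}u(x_0)$, and completes the proof.
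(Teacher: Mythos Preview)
The paper does not prove Alexandroff's theorem: it is quoted as a classical result with a reference to Evans--Gariepy, so there is no ``paper's own proof'' to compare your proposal against. Your outline is essentially the standard argument found in that reference --- reduction to the convex case, identification of $D^2u$ as a nonnegative matrix-valued Radon measure via directional second derivatives, selection of Lebesgue points of the absolutely continuous part where the singular part has vanishing density, and a blow-up/compactness argument to obtain the pointwise second-order expansion --- and it is correct at the level of a sketch.

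Two small points you may want to tighten if you write this out in full. First, the step ``monotonicity of the gradient together with the bounded Hessian mass yields uniform Lipschitz and $L^\infty$ bounds on $B_{1/2}$'' deserves one explicit line (e.g.\ integrating the Hessian measure along segments from the origin to bound $\nabla u_r$, then integrating again). Second, to conclude that the limit $w$ satisfies $\nabla w(0)=0$ you are implicitly using that locally uniform limits of convex functions have converging gradients at differentiability points of the limit; this is standard but worth citing. Finally, note the sign/factor convention in the paper's displayed expansion (the term $+B_{x_0}(x-x_0)\cdot(x-x_0)$) differs from the usual $-\tfrac12 B_{x_0}(x-x_0)\cdot(x-x_0)$ you use at the end; this is a cosmetic discrepancy in the statement rather than an issue with your argument.
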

Usual references for the theory of optimal transport that we will use throughout this paper include \cite{VillaniOT}, \cite{VillaniO&N} as well as \cite{OTAM} which also features a chapter dedicated to the JKO scheme for the Fokker-Planck equation.

\section{Proof of the inequality}
In the sequel, unless otherwise indicated, $\Omega$ will denote a bounded convex subset of $\R^d$ with non-empty interior. The weak convergence of measures will be in duality with $C(\Bar{\Omega})$, however, we will work with probability measures that have densities and that therefore cannot be concentrated on $\partial \Omega$ which is negligible for the Lebesgue measure because $\Omega$ is convex.
We take $R>0$ to be such that $\Omega \subset \Bar{B}(R/2)$.
\begin{definition}
We say that a cost $c(x,y)=h(x-y)$ satisfies (H1) if :
\begin{enumerate}
    \item $h \in C^2(\Bar{B}(R))$
    \item $h$ is strictly convex
    \item $h$ is radially symmetric
\end{enumerate}
\end{definition}
\begin{rem}
In particular, a cost satisfying (H1) is semiconcave i.e. there exists $C >0$ such that $x \mapsto h(x)-C|x|^2$ is concave on $B(R)$. Since it is known that Kantorovich potentials can be taken to be $c$-concave, i.e. of the form
\begin{equation*}
    \varphi(x) = \inf_{y \in \Omega} h(x-y) - \psi(y)
\end{equation*}
for some function $\psi : \Omega \to \Bar{\R}$, they can also be asumed to be semiconcave, with the same semiconcavity constant $C$ as $h$.
\end{rem}

In order to deal with regularity issues, we will approximate the cost function with costs satisfying (H1).
\begin{lemma}
\label{lemmacostconv}
Let $h\in C^1(\R^d)$ be a radially symmetric strictly convex function and $\eta _\varepsilon$ be a radially symmetric mollifier. Then $h_\varepsilon = \eta _\varepsilon * h$ satisfies \textnormal{(H1)} and $h_\varepsilon \xrightarrow{C^1(\Bar{B}(R))} h$.
\end{lemma}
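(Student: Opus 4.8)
The plan is to verify, one at a time, the three requirements in (H1) for $h_\varepsilon$ --- namely $C^2$ regularity on $\bar B(R)$, strict convexity, and radial symmetry --- and then the convergence $h_\varepsilon \to h$ in $C^1(\bar B(R))$. Throughout I will only use the standard properties of a mollifier: $\eta_\varepsilon \in C^\infty_c(\R^d)$, $\eta_\varepsilon \geq 0$, $\int \eta_\varepsilon = 1$, $\supp \eta_\varepsilon \subset \bar B(\varepsilon)$, and $\eta_\varepsilon$ radially symmetric. Three of these four points are routine facts about convolution; the one that genuinely uses \emph{strict} rather than plain convexity of $h$, and which I expect to be the only delicate step, is that mollification preserves strict convexity.

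Regularity and radial symmetry are immediate. Since $h$ is continuous it is locally integrable, so $h_\varepsilon = \eta_\varepsilon * h$ is well defined, and because $\eta_\varepsilon$ is smooth and compactly supported, $h_\varepsilon \in C^\infty(\R^d) \subset C^2(\bar B(R))$. For radial symmetry, given $\rho \in O(d)$ the change of variables $y \mapsto \rho y$ combined with $\eta_\varepsilon(\rho w) = \eta_\varepsilon(w)$ and $h(\rho z) = h(z)$ yields $h_\varepsilon(\rho x) = h_\varepsilon(x)$ for every $x$. Plain convexity is the usual computation: for $\lambda \in [0,1]$ one writes $\lambda x + (1-\lambda)x' - y = \lambda(x-y) + (1-\lambda)(x'-y)$, applies convexity of $h$ pointwise in $y$ inside the integral, and uses $\eta_\varepsilon \geq 0$ together with $\int \eta_\varepsilon = 1$.

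For strict convexity, fix $x \neq x'$ and $\lambda \in (0,1)$, and set
\[
g(y) := \lambda h(x-y) + (1-\lambda) h(x'-y) - h\big(\lambda(x-y) + (1-\lambda)(x'-y)\big).
\]
Because $x - y \neq x' - y$ for every $y$, strict convexity of $h$ forces $g(y) > 0$ for every $y$. Since $\eta_\varepsilon \geq 0$ and the set $\{\eta_\varepsilon > 0\}$ has positive Lebesgue measure (otherwise $\int \eta_\varepsilon$ would vanish), we get $\int \eta_\varepsilon(y)\, g(y)\, \diff y > 0$, that is $h_\varepsilon(\lambda x + (1-\lambda)x') < \lambda h_\varepsilon(x) + (1-\lambda) h_\varepsilon(x')$. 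Hence $h_\varepsilon$ is strictly convex, and so $h_\varepsilon$ satisfies (H1). This is the step I would double-check, since it is where the hypothesis cannot be weakened to mere convexity.

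Finally, for the $C^1$ convergence: since $h \in C^1$, differentiating under the integral sign gives $\nabla h_\varepsilon = \eta_\varepsilon * \nabla h$. Each of $h, \partial_1 h, \dots, \partial_d h$ is continuous, hence uniformly continuous on the compact set $\bar B(R+1)$, so for $\varepsilon \leq 1$ the classical estimate
\[
\sup_{x \in \bar B(R)} \big| (\eta_\varepsilon * f)(x) - f(x) \big| \;\leq\; \sup_{|z| \leq \varepsilon} \ \sup_{x \in \bar B(R)} \big| f(x - z) - f(x) \big|
\]
tends to $0$ as $\varepsilon \to 0$, applied with $f = h$ and with $f = \partial_i h$ for $i = 1, \dots, d$. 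Therefore $h_\varepsilon \to h$ and $\nabla h_\varepsilon \to \nabla h$ uniformly on $\bar B(R)$, which is exactly convergence in $C^1(\bar B(R))$ and finishes the argument.
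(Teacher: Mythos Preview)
Your proof is correct. The paper itself states this lemma without proof, presumably because each step is a standard fact about mollification; your write-up supplies exactly those verifications (smoothness of $\eta_\varepsilon * h$, preservation of radial symmetry under convolution of radial functions, preservation of strict convexity via the pointwise-positive integrand argument, and uniform convergence of $\eta_\varepsilon * f$ to $f$ on compacts for continuous $f$ applied to $h$ and $\nabla h$). There is nothing to compare against, and no gap in what you wrote.
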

To prove the convergence back to the original problem we will need a few lemmas. \break
We recall that if $(X, \Sigma, \mu)$ is a measure space and $f_n, f$ are measurable functions, we say that $f_n$ converges in $\mu$-measure to $f$ if for every $\varepsilon>0$
\begin{equation*}
    \lim_{n \xrightarrow{}+\infty} \mu \left( \left\{ \left| f_n -f \right|> \varepsilon \right\} \right) = 0.
\end{equation*}
and we will denote this convergence by $ f_n \xrightarrow{\mu} f$.

\begin{lemma}
\label{lemmacvgmeasure}
Let $\Omega \subset \R^d$ and $T,T_n: \Omega \xrightarrow{} \Omega$ be measurable functions. Let $\varrho \in \prob (\Omega)$ then we have
\begin{equation*}
    T_n \xrightarrow{\varrho} T \iff (\id \times T_n)_{\#}\varrho \xrightarrow{}(\id \times T)_{\#}\varrho \textnormal{ weakly},
\end{equation*}
where $(\textnormal{id}\times T ): \Omega \to \Omega$ is the function defined as $(\textnormal{id}\times T_n) (x)= (x,T(x))$
\end{lemma}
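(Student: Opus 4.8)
The plan is to establish the two implications separately, exploiting throughout that $\Omega$ is bounded, so all the measures $(\id\times T_n)_\#\varrho$ live on the compact set $\bar\Omega\times\bar\Omega$ and testing against $C(\bar\Omega\times\bar\Omega)$ suffices. For the forward implication, suppose $T_n\xrightarrow{\varrho}T$. Fix $\phi\in C(\bar\Omega\times\bar\Omega)$; by compactness $\phi$ is uniformly continuous, so given $\delta>0$ there is $\omega(\delta)\to 0$ controlling its modulus of continuity in the second variable uniformly in the first. Then I would split
\begin{equation*}
\left| \int_\Omega \phi(x,T_n(x))\diff\varrho - \int_\Omega \phi(x,T(x))\diff\varrho \right| \leq \int_{\{|T_n-T|\leq\delta\}} \omega(\delta)\diff\varrho + \int_{\{|T_n-T|>\delta\}} 2\norm{\phi}_\infty \diff\varrho,
\end{equation*}
and the first term is $\leq\omega(\delta)$ while the second tends to $0$ as $n\to\infty$ by the convergence in measure; letting $n\to\infty$ and then $\delta\to0$ gives $\int\phi\,\diff(\id\times T_n)_\#\varrho\to\int\phi\,\diff(\id\times T)_\#\varrho$, which is exactly weak convergence.

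For the reverse implication, suppose $(\id\times T_n)_\#\varrho\to(\id\times T)_\#\varrho$ weakly but, for contradiction, that $T_n\not\xrightarrow{\varrho}T$, so there is $\varepsilon>0$ and a subsequence (not relabeled) with $\varrho(\{|T_n-T|>\varepsilon\})\geq\alpha>0$ for all $n$. The idea is to feed into the weak convergence a test function that detects the distance between the two coordinates. Since $T$ is only measurable, I cannot directly use a continuous function of $(x,y)$ involving $T(x)$; instead I would use Lusin's theorem to find a compact set $K\subset\Omega$ with $\varrho(\Omega\setminus K)<\alpha/4$ on which $T$ is continuous, hence (Tietze) extends to $\tilde T\in C(\bar\Omega;\bar\Omega)$. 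Consider $\phi(x,y)=\chi(x)\,\min(|y-\tilde T(x)|,\varepsilon)\in C(\bar\Omega\times\bar\Omega)$ where $\chi$ approximates $\ind_K$; testing weak convergence against $\phi$ gives $\int\chi(x)\min(|T_n(x)-\tilde T(x)|,\varepsilon)\diff\varrho\to\int\chi(x)\min(|T(x)-\tilde T(x)|,\varepsilon)\diff\varrho$. The right-hand side is small because $T=\tilde T$ on $K$ and $\varrho(\Omega\setminus K)$ is small, whereas on $K\cap\{|T_n-T|>\varepsilon\}$ (a set of $\varrho$-measure at least $\alpha - \alpha/4$) the integrand on the left equals $\chi(x)\,\varepsilon$, forcing the left-hand side to stay bounded below — a contradiction once the approximations are tuned.

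The main obstacle is precisely this measurability of $T$: a naive choice of continuous test function wants to see $T(x)$ in the second slot, which is illegitimate, so the Lusin–Tietze regularization step is the technical heart of the argument and the bookkeeping of the several small parameters ($\alpha/4$, the closeness of $\chi$ to $\ind_K$, the truncation level $\varepsilon$) must be organized so the contradiction survives. Everything else — uniform continuity on the compact product, the two-region split, taking $n\to\infty$ before shrinking the auxiliary parameter — is routine. One should also note at the outset that each $(\id\times T_n)_\#\varrho$ is a probability measure on $\bar\Omega\times\bar\Omega$, so no mass escapes and weak limits are automatically probability measures, which is what lets the split estimates close cleanly.
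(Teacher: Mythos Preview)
Your proposal is correct, but both implications are argued differently from the paper. For the forward direction, the paper argues by contradiction: if the weak convergence fails for some $f\in C_b(\Omega\times\Omega)$, one extracts from the offending subsequence a further subsequence along which $T_{n_k}\to T$ $\varrho$-a.e.\ (possible because convergence in $\varrho$-measure is assumed) and then applies dominated convergence. Your direct modulus-of-continuity splitting is equally valid and arguably more transparent, though it leans on the compactness of $\bar\Omega\times\bar\Omega$ whereas the paper's subsequence-plus-DCT argument works for any $f\in C_b$ without boundedness of $\Omega$.

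For the reverse direction, both arguments begin with Lusin's theorem to produce a compact $K$ on which $T$ is continuous with $\varrho(\Omega\setminus K)$ small, but the paper avoids Tietze and the auxiliary cutoff $\chi$ entirely: it simply observes that $A=\{(x,y)\in K\times\Omega:|T(x)-y|\geq\varepsilon\}$ is closed, invokes the Portmanteau upper semicontinuity $\limsup(\id\times T_n)_\#\varrho(A)\leq(\id\times T)_\#\varrho(A)=0$, and reads off $\limsup\varrho(\{x\in K:|T_n(x)-T(x)|\geq\varepsilon\})=0$, which gives the conclusion after sending $\delta\to0$. This is shorter and spares you the bookkeeping of the parameters $(\alpha/4,\chi,\varepsilon)$. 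Incidentally, in your version the cutoff $\chi$ is not actually needed: since $\tilde T$ is already continuous on all of $\bar\Omega$ and the integrand is bounded by $\varepsilon$, taking $\chi\equiv1$ gives the right-hand side $\leq\varepsilon\,\varrho(\Omega\setminus K)<\varepsilon\alpha/4$ while the left-hand side stays $\geq 3\varepsilon\alpha/4$.
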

\begin{proof}
First if $T_n \xrightarrow{\varrho} T$, assuming by contradiction that the weak convergence does not hold, there exists $f \in C_b(\Omega \times \Omega)$, $\varepsilon >0$ and a subsequence $(T_{n_k})_k$ such that 
\begin{equation*}
    \left|\int_\Omega f(x,T(x))- f(x,T_{n_k}(x)) \diff \varrho(x) \, \right| \geq \varepsilon.
\end{equation*}
Extracting a further subsequence such that $T_{{n_k}_j}$ converges to $T$ $\varrho$-a.e. and using dominated convergences gives a contradiction. \\
Let $\varepsilon >0$, by Lusin's theorem, for $\delta >0$, there exists a compact set $K \subset \Omega$ such that $\left.T\right|_K$ is continuous and $\varrho(\Omega \backslash K) \leq \delta$. Therefore $A= \{ (x,y)\in K\times \Omega, |T(x) - y| \geq \varepsilon \}$ is a closed set in $\Omega \times \Omega$. Since the evaluation on closed sets is upper semi continuous for the weak convergence of measures,
\begin{align*}
    0=(\id \times T)_{\#}\varrho(A) \geq & \limsup (\id \times T_n)_{\#}\varrho(A) \\
    =& \limsup \varrho \big( \{x \in K, |T(x) - T_n(x)| \geq \varepsilon \} \big) \\
    \geq & \limsup \varrho \big( \{x \in \Omega, |T(x) - T_n(x)| \geq \varepsilon \} \big) - \delta.
\end{align*}
Letting $\delta \xrightarrow{}0$ gives $T_n \xrightarrow{\varrho} T$.
\end{proof}

\begin{lemma}
\label{cvginverse}
Given metric spaces $X$ and $Y$, let $f,f_n : X \xrightarrow{}Y$ be bijective functions such that $f_n$ converges uniformly to $f$. Then $g_n=f_n^{-1}$ converges uniformly to $g=f^{-1}$ if $g$ is uniformly continuous. Furthermore, if $(x_n)_n \in X$ is a sequence such that $f_n(x_n) \xrightarrow{}f(x)$ then $x_n \xrightarrow{}x$.
\end{lemma}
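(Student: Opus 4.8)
The plan is to exploit the algebraic identities coming from bijectivity --- namely $g\circ f=\id_X$ and $f_n\circ g_n=\id_Y$ --- in order to transfer the hypothesis ``$f_n$ is close to $f$'' into the conclusion ``$g_n$ is close to $g$''. The key trick is to evaluate the uniform estimate on $f_n$ not at an arbitrary point of $X$ but at the point $g_n(y)$, which turns an inequality about $f_n-f$ into one about $g_n-g$. Note that neither $f$ nor the $f_n$ need be continuous: the only continuity that will be used is that of $g$.

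For the first assertion I would fix $\varepsilon>0$, use the uniform continuity of $g$ to pick $\delta>0$ with $d_Y(y,y')<\delta\Rightarrow d_X(g(y),g(y'))<\varepsilon$, and use the uniform convergence to pick $N$ with $\sup_{x\in X}d_Y(f_n(x),f(x))<\delta$ for $n\geq N$. Then, given $y\in Y$ and $n\geq N$, setting $x:=g_n(y)$ (so $f_n(x)=y$) gives $d_Y(f(x),y)=d_Y(f(x),f_n(x))<\delta$, hence $d_X(g_n(y),g(y))=d_X(x,g(y))=d_X\big(g(f(x)),g(y)\big)<\varepsilon$. Since $y$ is arbitrary this yields $\sup_{y\in Y}d_X(g_n(y),g(y))\leq\varepsilon$ for all $n\geq N$, i.e. $g_n\to g$ uniformly.

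For the ``furthermore'' part I would set $y:=f(x)$ and $y_n:=f_n(x_n)$, so that $y_n\to y$ and $g(y)=x$, and then estimate
\[
d_X(x_n,x)=d_X\big(g_n(y_n),g(y)\big)\leq d_X\big(g_n(y_n),g(y_n)\big)+d_X\big(g(y_n),g(y)\big).
\]
The first term is at most $\sup_{z\in Y}d_X(g_n(z),g(z))$, which tends to $0$ by the first part, and the second tends to $0$ since $g$ is continuous (being uniformly continuous) and $y_n\to y$; hence $x_n\to x$.

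I do not expect a genuine obstacle here --- the statement is elementary once one thinks of composing with the appropriate inverse. The one point that deserves care is that the uniform continuity of $g$ is really needed even for the second assertion: taking $f_n=f$ a fixed discontinuous bijection of $[0,1]$, one can arrange $f(x_n)\to f(x)$ with $x_n\not\to x$. For this reason I would deduce the second assertion from the conclusion of the first rather than attempt a direct argument.
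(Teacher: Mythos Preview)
Your argument is correct and is essentially the same as the paper's: the paper writes $g_n=g\circ f\circ g_n$, shows $f\circ g_n\to\id_Y$ uniformly via $d(f(g_n(y)),y)=d(f(g_n(y)),f_n(g_n(y)))$, and then post-composes with the uniformly continuous $g$ --- which is exactly your computation with $x:=g_n(y)$, only phrased compositionally rather than in $\varepsilon$--$\delta$ form. For the second assertion the paper likewise appeals to the first part together with the fact that uniform convergence preserves convergence of sequences, which is precisely your triangle-inequality estimate.
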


\begin{proof}
It is straightforward to check that if $w : Y \xrightarrow{}X $ is uniformly continuous and if $h_n : X \xrightarrow{}Y$ converges uniformly to $h$, then $w \circ h_n$ converges uniformly to $w \circ h$. Therefore to prove that $g_n = g \circ f \circ g_n$ uniformly converges to $g \circ f \circ g$ it is enough to check that $f \circ g_n $ uniformly converges to the identity function. For $y \in Y$, we have that $d\left(f \circ g_n (y), y\right)= d(f \circ g_n(y), f_n \circ g_n (y))$ and the uniform convergence of $f_n$ to $f$ concludes. \\
The second claim follows from the fact that uniform convergence of functions preserves the convergence of sequences of points.
\end{proof}

\begin{lemma}
\label{transportcvgpp}
Let $T_n , T : \Omega \to \R^d$ be uniformly bounded, $1 < p<\infty$ and $\varrho_n , \varrho \in L^1 (\Omega)$ such that $\varrho_n \xrightarrow{L^1(\Omega)} \varrho$, $\varrho_n T_n \rightharpoonup \varrho T $ as measures in $\mathcal{M}(\Omega)$ and $\int_\Omega |T_n|^p \varrho_n \xrightarrow{} \int_\Omega |T|^p \varrho$. Then $T_n \xrightarrow{L^p(\Omega,\varrho)} T$.
\end{lemma}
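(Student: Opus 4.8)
\medskip
\noindent\textbf{Proof plan.}
Fix $M>0$ with $|T_n|\le M$ and $|T|\le M$ on $\Omega$. The plan is to lift the statement to weak-$*$ convergence of measures on the compact set $\Bar{\Omega}\times\Bar{B}(M)$, to identify the limit through a disintegration (Young measure) argument, and then to regularize the merely measurable map $T$.

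Since $|T_n-T|\le 2M$ and $\varrho_n\to\varrho$ in $L^1$, we have $\bigl|\int_\Omega|T_n-T|^p(\varrho_n-\varrho)\,\diff x\bigr|\le(2M)^p\norm{\varrho_n-\varrho}_{L^1(\Omega)}\to0$, so it suffices to prove $\int_\Omega|T_n-T|^p\varrho_n\,\diff x\to0$. Set $\gamma_n:=(\id\times T_n)_{\#}(\varrho_n\diff x)\in\mathcal M^+(\Bar{\Omega}\times\Bar{B}(M))$; this has mass $\norm{\varrho_n}_{L^1(\Omega)}\to\norm{\varrho}_{L^1(\Omega)}$, and by compactness of $\Bar{\Omega}\times\Bar{B}(M)$ every subsequence has a further subsequence with $\gamma_{n_k}\rightharpoonup\gamma$ for some $\gamma\in\mathcal M^+(\Bar{\Omega}\times\Bar{B}(M))$.

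To identify $\gamma$: pairing with $g\in C(\Bar{\Omega})$ shows its first marginal is $\varrho\diff x$, so $\gamma$ disintegrates as $\gamma=\int_\Omega(\delta_x\otimes\nu_x)\,\varrho(x)\,\diff x$ with $\nu_x\in\prob(\Bar{B}(M))$. From $\int_\Omega g\,\varrho_nT_n\,\diff x=\int g(x)\,y\,\diff\gamma_n$, $g\in C(\Bar{\Omega})$, and $\varrho_nT_n\rightharpoonup\varrho T$, the barycenter of $\nu_x$ equals $T(x)$ for a.e.\ $x$; from $\int_\Omega|T_n|^p\varrho_n\,\diff x=\int|y|^p\,\diff\gamma_n\to\int|y|^p\,\diff\gamma$ together with the moment hypothesis, $\int|y|^p\,\diff\nu_x(y)=|T(x)|^p$ for a.e.\ $x$ --- indeed Jensen gives $\int|y|^p\,\diff\nu_x(y)\ge|T(x)|^p$ pointwise, while the $\varrho$-average of the left-hand side equals $\int_\Omega|T|^p\varrho$. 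Strict convexity of $z\mapsto|z|^p$ (here $p>1$ is used) then forces $\nu_x=\delta_{T(x)}$ for a.e.\ $x$, i.e.\ $\gamma=(\id\times T)_{\#}(\varrho\diff x)$. This limit being independent of the subsequence, $\gamma_n\rightharpoonup(\id\times T)_{\#}(\varrho\diff x)$ along the full sequence. (The moment hypothesis is essential here: without it the claim fails, as fast oscillating $T_n$ show.)

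The main technical point is now that $(x,y)\mapsto|y-T(x)|^p$ is not a continuous test function, $T$ being only measurable. To handle this, fix $\varepsilon>0$; by Lusin's theorem pick a closed set $K\subset\Omega$ with $|\Omega\setminus K|$ as small as we like and $T|_K$ continuous, and by Tietze extension followed by the $1$-Lipschitz nearest-point projection onto the convex set $\Bar{B}(M)$, pick $\tilde T\in C(\Bar{\Omega};\Bar{B}(M))$ with $\tilde T=T$ on $K$. Since $(x,y)\mapsto|y-\tilde T(x)|^p$ is continuous on $\Bar{\Omega}\times\Bar{B}(M)$,
\begin{equation*}
\int_\Omega|T_n-\tilde T|^p\varrho_n\,\diff x=\int|y-\tilde T(x)|^p\,\diff\gamma_n\longrightarrow\int|y-\tilde T(x)|^p\,\diff\gamma=\int_{\Omega\setminus K}|T-\tilde T|^p\varrho\le(2M)^p\int_{\Omega\setminus K}\varrho ,
\end{equation*}
and moreover $\int_\Omega|\tilde T-T|^p\varrho_n\,\diff x\le(2M)^p\int_{\Omega\setminus K}\varrho_n$. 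From $|T_n-T|^p\le 2^{p-1}(|T_n-\tilde T|^p+|\tilde T-T|^p)$ we then obtain $\limsup_n\int_\Omega|T_n-T|^p\varrho_n\,\diff x\le 2^p(2M)^p\varepsilon$, using that $\int_{\Omega\setminus K}\varrho\le\varepsilon$ and $\sup_n\int_{\Omega\setminus K}\varrho_n\le\varepsilon$ once $|\Omega\setminus K|$ is small enough --- the latter by equi-integrability of $\{\varrho_n\}$, a consequence of its $L^1$ convergence. Letting $\varepsilon\to0$ concludes. (Alternatively, avoiding Young measures, one may combine the three hypotheses with the uniform convexity inequality for $|z|^p$ on $\Bar{B}(M)$ to obtain $\int_\Omega|T_n-T|^2\varrho_n\,\diff x\to0$, and then conclude by H\"older's inequality.)
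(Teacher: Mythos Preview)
Your proof is correct, but it takes a genuinely different route from the paper. The paper's argument is shorter and stays entirely in function spaces: it extracts a weak-$*$ limit $\tilde T$ of $(T_n)$ in $L^\infty(\Omega)$ (using Banach--Alaoglu and separability of $L^1(\Omega)$), combines this with the strong $L^1$ convergence $\varrho_n\to\varrho$ to get $\varrho_nT_n\to\varrho\tilde T$ in distributions, and hence $\varrho T=\varrho\tilde T$. From $f\varrho\in L^1$ for every $f\in L^{p'}(\Omega,\varrho)$ one then reads off $T_n\rightharpoonup T$ weakly in $L^p(\Omega,\varrho)$; together with the norm convergence $\int_\Omega|T_n|^p\varrho\to\int_\Omega|T|^p\varrho$ (obtained from the moment hypothesis and $\|\varrho_n-\varrho\|_{L^1}\to0$), the Radon--Riesz property of the uniformly convex space $L^p(\Omega,\varrho)$ gives strong convergence directly. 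By contrast, you lift to measures on $\bar\Omega\times\bar B(M)$, identify the limit via disintegration and the equality case in Jensen for the strictly convex function $|\cdot|^p$, and then need the Lusin/Tietze step to test against the discontinuous integrand $|y-T(x)|^p$. Both arguments exploit $p>1$ through strict/uniform convexity, but the paper's use of Radon--Riesz avoids the Young-measure machinery and the regularization of $T$ entirely; your approach, on the other hand, is closer in spirit to how the lemma is actually applied later (via the weak convergence $(\id\times T_n)_\#\varrho_n\rightharpoonup(\id\times T)_\#\varrho$, cf.\ the remark after the lemma) and would adapt more readily if one wanted to replace $|y|^p$ by a general strictly convex moment.
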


\begin{rem}
The assumptions are satisfied if $\Omega$ is bounded and $T_n,T$ take values in $\Omega$, $\varrho_n \xrightarrow{L^1(\Omega)} \varrho$ and $(\id \times T_n)_{\#} \varrho_n \rightharpoonup (\id \times T)_{\#} \varrho$
\end{rem}

\begin{proof}
Since the $T_n$ are uniformly bounded in $L^\infty$, up to a subsequence we have weak-$*$ convergence in $L^\infty$ : $T_n \overset{\ast}{\rightharpoonup} \tilde{T}\in L^\infty(\Omega)$. Since $\varrho_n \xrightarrow{} \varrho$ strongly in $L^1$, strong-weak convergence yields convergence in distribution of the product $\varrho_n T_n \xrightarrow{} \varrho \tilde{T}$. By uniqueness of the limit in the space of distributions, the convergence $\varrho_n T_n \rightharpoonup \varrho T$ therefore gives that that $\varrho T = \varrho \tilde{T}$. Then we have :
\begin{align*}
    \left| \int_\Omega |T_n|^p \varrho - \int_\Omega |T|^p \varrho \right| &\leq \left| \int_\Omega |T_n|^p \varrho - \int_\Omega |T_n|^p \varrho_n \right| + \left| \int_\Omega |T_n|^p \varrho_n - \int_\Omega |T|^p \varrho \right| .
\end{align*}
The second term goes to $0$ by assumption and the first term is bounded by $C^p \norm{\varrho - \varrho_n}_1$ which also goes to $0$. Now it is enough to prove weak convergence in $L^p$, so let $f \in L^{p'}(\Omega, \varrho)$. Since by H\"older $f\varrho \in L^1(\Omega)$,
\begin{align*}
    \left| \int_\Omega f T \varrho \diff x - \int_\Omega f T_n \varrho \diff x \right| = \left| \int_\Omega (T-T_n)f \varrho \diff x \right| \xrightarrow{n\xrightarrow{}\infty} \left| \int_\Omega (T-\tilde{T})f \varrho \diff x \right| =0 .
\end{align*}
Hence $T_n \xrightarrow{L^p(\Omega,\varrho)} T$, and by uniqueness of this limit the whole sequence actually converges.
\end{proof}

We are now ready to begin the proof of the five gradients inequality with smooth densities and a cost satisfying (H1)
\begin{lemma}
\label{lemme5grad}
Given $h$ satisfying (H1), $\varrho$ and $g$ smooth probability densities, let $H \in C^2(\R^d)$ be a convex function, then the following inequality holds :
\begin{equation}
    \int_{\Omega} \big(  \varrho \, \nabla \cdot [\nabla H ( \nabla \varphi)] -  g \, \nabla \cdot [\nabla H (-\nabla \psi)] \big) \diff x \leq 0,
\end{equation}
where $(\varphi,\psi)$ is a choice of Kantorovich potentials for the optimal transport problem between $\varrho$ and $g$ for the transport cost given by $h$, and $\nabla \, \cdot$ denotes the distributional divergence.
\end{lemma}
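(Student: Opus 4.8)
The plan is to use the dual formulation of optimal transport together with the optimality of the Kantorovich potentials $(\varphi,\psi)$, perturbed in the direction dictated by $\nabla H$. Since the cost $h$ satisfies (H1), it is semiconcave, hence by the Remark the potentials $\varphi,\psi$ are semiconcave on $\Omega$; by Alexandroff's theorem they are twice differentiable a.e., and the optimal transport map $T(x) = x - (\nabla h)^{-1}(\nabla\varphi(x))$ is well-defined $\varrho$-a.e. and pushes $\varrho$ onto $g$. Concretely, I would introduce for small $t$ the perturbed maps obtained by flowing along the vector field $\nabla H(\nabla\varphi)$ on the $\varrho$-side and $\nabla H(-\nabla\psi)$ on the $g$-side: set $S_t(x) = x + t\,\nabla H(\nabla\varphi(x))$ and consider the competitor coupling transporting $\varrho$ to $(S_t)_\#\varrho$ (and symmetrically on the other marginal). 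The key point is that the total transport cost, as a function of $t$, is minimized at $t=0$, which forces the first-order term to vanish or have the right sign, while a second-order Taylor expansion of $h$ (available because $h\in C^2(\bar B(R))$) produces exactly the divergence terms $\varrho\,\nabla\cdot[\nabla H(\nabla\varphi)]$ and $g\,\nabla\cdot[\nabla H(-\nabla\psi)]$ after an integration by parts, the boundary terms being harmless since $\varrho,g$ are smooth and everything takes place in the bounded convex $\Omega$.

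More precisely, the argument I would run is the following. Write $\Phi_t(x,y) = h\big(x - y + t(\nabla H(\nabla\varphi(x)) + \nabla H(-\nabla\psi(y)))\big)$; evaluated along the optimal plan $\gamma = (\id\times T)_\#\varrho$ one has, using $\nabla h(x-T(x)) = \nabla\varphi(x)$ and the analogous identity on the $\psi$-side (valid a.e. by Alexandroff), a first-order expansion whose linear coefficient is $\int \nabla\varphi\cdot\nabla H(\nabla\varphi)\,d\varrho + \int(-\nabla\psi)\cdot\nabla H(-\nabla\psi)\,dg \geq 0$ (a pointwise inequality, since $z\cdot\nabla H(z)\geq H(z)-H(0)\geq$ something of fixed sign only after centering — so actually one should instead compare with the \emph{optimal} plan for the perturbed marginals, not reuse $\gamma$). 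The clean way is: let $c_t$ denote the optimal cost between $(S_t)_\#\varrho$ and $(\tilde S_t)_\#g$ where $\tilde S_t(y) = y + t\nabla H(-\nabla\psi(y))$; then $t\mapsto c_t$ has a stationary point at $t=0$ because $(\varphi,\psi)$ is already optimal and the perturbation is an inner variation, and computing $\frac{d^2}{dt^2}\big|_{t=0}$ of the cost evaluated along the fixed coupling $(S_t\times \tilde S_t)_\#\gamma$ — which is an upper bound for $c_t$ — gives $\int D^2 h\,(\cdot)\cdot(\cdot)\,d\gamma \geq 0$, while the constraint that $t=0$ is a minimum of $t\mapsto (\text{cost along that coupling}) - (\text{duality lower bound})$ converts the second-derivative data into the divergence integrand after integrating by parts against $\varrho$ and $g$.

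I expect the main obstacle to be making the second-order expansion rigorous: $\varphi$ and $\psi$ are only twice differentiable almost everywhere (not $C^2$), so the manipulation "$\nabla\cdot[\nabla H(\nabla\varphi)] = D^2H(\nabla\varphi):D^2\varphi$" must be understood via the absolutely continuous part $D^2_{ac}\varphi$ supplied by Alexandroff's theorem, and one has to check that the singular part of $D^2\varphi$ (which is a nonpositive measure, by semiconcavity, composed with the monotone-type quantity $D^2H\geq 0$) contributes with the favorable sign and hence can be discarded when proving a $\leq 0$ inequality. The other delicate point is justifying that the perturbed maps $S_t$ remain valid transport competitors into $\Omega$ (using convexity of $\Omega$ and, if necessary, composing with the projection onto $\Omega$, which is $1$-Lipschitz and thus does not increase the relevant quantities) and that the first-order optimality genuinely holds — this is where $c$-concavity of $\varphi,\psi$ and the identity $T = \id - (\nabla h)^{-1}(\nabla\varphi)$ enter. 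Once these regularity bookkeeping issues are handled, the inequality falls out of "second derivative at a minimum is $\geq 0$" together with one integration by parts, so the heart of the matter is purely the Alexandroff-level regularity of semiconcave Kantorovich potentials for costs satisfying (H1).
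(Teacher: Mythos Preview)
Your variational-perturbation argument has a genuine gap, and in fact the right ingredients are the ones you bury in your last paragraph as ``obstacles.'' The claim that $t\mapsto c_t$ is stationary at $t=0$ is unjustified: the flows $S_t,\tilde S_t$ change the marginals, so there is no constraint that singles out $t=0$ as a critical point, and your own parenthetical (``so actually one should instead compare with the optimal plan for the perturbed marginals'') shows you have noticed the first-order argument does not close. Bounding $c_t$ from above by the cost along the fixed coupling $(S_t\times\tilde S_t)_\#\gamma$ gives $c_t\le U(t)$ with equality at $t=0$, but this alone carries no sign information on the divergence integral; you would still need a matching lower bound, and the ``duality lower bound'' you invoke is not made precise. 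There is also no integration by parts step in the lemma itself: the statement is about the \emph{distributional} divergences integrated against the smooth densities $\varrho,g$, so the boundary issues you mention belong to the next step (Theorem~\ref{th5gradsmooth}), not here.

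The paper's proof is direct and avoids the variational detour entirely. It does exactly what you describe at the end: apply the BV chain rule to $\nabla H(\nabla\varphi)$, use semiconcavity of $\varphi$ to see that the Cantor and jump parts of $D^2\varphi$ are $\le 0$ as measures, and combine with $D^2H\ge 0$ to obtain $\nabla\cdot[\nabla H(\nabla\varphi)]\le \Tr\big(D^2H(\nabla\varphi)\,D^2_{ac}\varphi\big)$ as measures (and the reverse inequality for the $-\nabla\psi$ term). The one key idea you are missing is how to combine the two sides. After pushing the $g$-integral forward through the optimal map $T$ and using $-\nabla\psi(T(x))=\nabla\varphi(x)$, the paper uses the pointwise second-order optimality of the coupling: for $\varrho$-a.e.\ $x$, the function $v\mapsto \varphi(x+v)+\psi(T(x)-v)$ attains its maximum at $v=0$ (this is just $c$-conjugacy), and both potentials are Alexandroff-twice-differentiable there, so $D^2_{ac}\varphi(x)+D^2_{ac}\psi(T(x))\le 0$. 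Tracing against the nonnegative matrix $D^2H(\nabla\varphi(x))$ finishes the proof. No flow, no second variation of the transport cost: just the chain rule, the sign of the singular part, and this pointwise Hessian inequality.
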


\begin{proof}
Let $(\varphi,\psi)$ be a choice of c-concave potentials for the transport problem. From c-concavity we deduce that $\varphi$ and $\psi$ are semiconcave and hence by Alexandroff's theorem (\ref{Alexandroff}), $\varphi$ and $\psi$ are twice differentiable almost everywhere and $\nabla \varphi$, $\nabla \psi$ are functions of bounded variation on $\Omega$.\\
We will denote by $J_\varphi$ the set of approximate jump points of $\nabla\varphi$, $\nabla \varphi ^+$ and $\nabla \varphi ^-$ the left and right approximations of $\nabla \varphi$ and $\nu_\varphi$ the approximate normal to $J_\varphi$. We will denote by $D^2 _{ac} \varphi$ and $D^2 _{c} \varphi$ respectively the absolutely continuous part and the Cantor part of $D^2 \varphi$. We will now use the chain rule for BV functions (see \cite{ambrosio2000fbv} for more details on notation and precise statements).
\begin{equation*}
    D[\nabla H ( \nabla \varphi)] = D^2 H(\nabla\varphi) D^2 _{ac} \varphi \mathcal{L}^d + D^2 H(\nabla\varphi) D^2 _{c}\varphi + (\nabla H ( \nabla \varphi^+) - \nabla H ( \nabla \varphi^-))\otimes \nu_{\varphi}\mathcal{H}^{d-1} \mres J_\varphi .
\end{equation*}
Using the semiconcavity of $\varphi$, we deduce that there exists some $C>0$ such that, as a measure, $D^2 \varphi \leq C \mathcal{L}^d$, and we have 
\begin{align*}
    &D^2 _{ac} \varphi \leq C \; \; \mathcal{L}^d \text{ a.e.},\\
    &D^2 _{c} \varphi \leq 0 ,\\
    &(\nabla \varphi^+ - \nabla \varphi^-) \otimes \nu_{\varphi}\mathcal{H}^{d-1} \mres J_\varphi \leq 0.
\end{align*}
In particular since $a\otimes b \leq 0$ implies the existence of $\lambda \leq 0$ such that $a=\lambda b$, using the convexity of $H$ we get 
\begin{equation*}
    (\nabla H ( \nabla \varphi^+) - \nabla H ( \nabla \varphi^-)) \cdot (\nabla \varphi^+  - \nabla \varphi^-) \geq 0,
\end{equation*}
and therefore
\begin{equation*}
    (\nabla H ( \nabla \varphi^+) - \nabla H ( \nabla \varphi^-)) \cdot  \nu_\varphi \leq 0 \; \; \mathcal{H}^{d-1} \mres J_\varphi \text{ ae.}
\end{equation*}
Using the nonnegativity of $D^2 H$ we therefore have 
\begin{align*}
    \nabla \cdot [\nabla H ( \nabla \varphi)]  =& \; \Tr(D^2 H(\nabla\varphi) D^2 _{ac} \varphi) \\
    &+ \Tr(D^2 H(\nabla\varphi) D^2 _{c} \varphi)\\
    &+(\nabla H ( \nabla \varphi^+) - \nabla H ( \nabla \varphi^-)) \cdot  \nu_\varphi\mathcal{H}^{d-1} \mres J_\varphi \\
    &\leq \Tr(D^2 H(\nabla\varphi) D^2 _{ac} \varphi) .
\end{align*}
Using the same arguments we also get 
\begin{equation*}
    \nabla \cdot [\nabla H (- \nabla \psi)] \geq \Tr(D^2 H(-\nabla\psi) D^2 _{ac} \psi).
\end{equation*}
Integrating with respect to non negative densities $\varrho, g$, we get
\begin{align*}
    \int_{\Omega} \big(  \varrho \, \nabla \cdot [\nabla H & ( \nabla \varphi)] -  g \, \nabla \cdot [\nabla H (-\nabla \psi)] \big) \diff x \\
    &\leq \int_{\Omega} \big(  \varrho \Tr(D^2 H(\nabla\varphi) D^2 _{ac} \varphi) \,  -  g \, \Tr(D^2 H(-\nabla\psi) D^2 _{ac} \psi)  \big) \diff x \\
    &= \int_{\Omega} \big(  \varrho \Tr(D^2 H(\nabla\varphi) D^2 _{ac} \varphi) \,  -  \varrho \, \big [\Tr(D^2 H(-\nabla\psi) D^2 _{ac} \psi) \circ T \big] \big) \diff x ,
\end{align*}
where $T$ is the optimal transport map between $\varrho$ and $g$. Let $S$ be the optimal transport between $g$ and $\varrho$ (for the cost $\tilde{c}(x,y)= c(y,x)$), then $S\circ T= \textnormal{id}$ $\varrho$-a.e., and therefore for $\varrho$-a.e. $x$ we have
\begin{align*}
    -\nabla\psi(T(x))&=\nabla h ( S\circ T(x) - T(x))) \\
    &= \nabla h ( x - T(x))\\
    &= \nabla \varphi (x) .
\end{align*}
We know that $\varphi,\psi$ are twice differentiable $\mathcal{L}^d$-a.e. hence
\begin{align*}
    1 &=g\big(\{y, \psi \text{ is twice differentiable at }y \}\big)\\
    &=\varrho \big(\{x, \psi \text{ is twice differentiable at }T(x) \}\big) .
\end{align*}
and since the function $v \mapsto \varphi(x +v) + \psi(T(x)-v) \leq h(x-y)$ is maximal at $v=0$ for $\varrho$-a.e. $x$, using that, for BV functions, the absolutely continuous part of the derivative coincides with the pointwise derivative, we have $D^2 _{ac} \varphi(x) +D^2 _{ac}\psi(T(x)) \leq 0$ $\varrho$-a.e. Therefore

\begin{align*}
    \int_{\Omega} \big(  \varrho \, \nabla \cdot [\nabla H & ( \nabla \varphi)] -  g \, \nabla \cdot [\nabla H (-\nabla \psi)] \big) \diff x \\
    & \leq \int_{\Omega} \big(  \varrho \Tr(D^2 H(\nabla\varphi) [D^2 _{ac} \varphi + D^2 _{ac}\psi \circ T] \big) \diff x \leq 0. \qedhere
\end{align*}
\end{proof}

\begin{theo}
\label{th5gradsmooth}
Given $h$ satisfying (H1), suppose that $\varrho,g$ are smooth probability densities, and let $H \in C^2(\R^d)$ be a convex radially symmetric function, then the following inequality holds 
\begin{equation*}
    \int_{\Omega} \big( \nabla \varrho \cdot \nabla H ( \nabla \varphi) + \nabla g \cdot \nabla H (\nabla \psi) \big) \diff x \geq 0,
\end{equation*}
where $(\varphi,\psi)$ is a choice of Kantorovich potentials for the optimal transport problem between $\varrho$ and $g$ for the transport cost given by $h$.
\end{theo}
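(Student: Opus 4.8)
The plan is to integrate by parts so as to bring the expression back to the one bounded by Lemma~\ref{lemme5grad}, and then to show that the boundary terms produced along the way have the right sign, using the convexity of $\Omega$ together with the radial structure of $h$ and $H$. First I would record the structure of the gradients. Writing $H(z)=\rho(|z|)$, the hypotheses that $H$ is $C^2$ and convex force $\rho'(0)=0$ and $\nabla H(z)=a(|z|)\,z$ for some $a\ge0$ (the eigenvalues of $D^2H(z)$ being $\rho''(|z|)$ and $\rho'(|z|)/|z|$, both nonnegative). In particular $\nabla H$ is odd, so $\nabla H(\nabla\psi)=-\nabla H(-\nabla\psi)$ and the quantity to be bounded below rewrites as $\int_\Omega\nabla\varrho\cdot\nabla H(\nabla\varphi)\,\diff x-\int_\Omega\nabla g\cdot\nabla H(-\nabla\psi)\,\diff x$. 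Likewise, from (H1), $\nabla h(z)=b(|z|)\,z$ with $b>0$ on $\R^d\setminus\{0\}$, so that $\nabla h(w)$ is always a nonnegative multiple of $w$.

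By the remark following (H1) I would take $\varphi,\psi$ to be $c$-concave, hence semiconcave on $\Omega$, so that $\nabla\varphi,\nabla\psi$, and therefore $\nabla H(\nabla\varphi),\nabla H(-\nabla\psi)$ by the $BV$ chain rule \cite{ambrosio2000fbv}, belong to $BV(\Omega;\R^d)\cap L^\infty$ and admit interior traces on $\partial\Omega$. Since $\Omega$ is convex, hence Lipschitz, and $\varrho,g$ are smooth up to the boundary, the Gauss--Green formula for $BV$ vector fields gives
\[
\int_\Omega\nabla\varrho\cdot\nabla H(\nabla\varphi)\,\diff x=-\int_\Omega\varrho\,\nabla\cdot[\nabla H(\nabla\varphi)]\,\diff x+\int_{\partial\Omega}\varrho\,\big(\nabla H(\nabla\varphi)\cdot\nu\big)\,\diff\mathcal{H}^{d-1},
\]
together with the analogous identity for the $g$-term, where $\nu$ is the outer unit normal and the normal components are interior traces. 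Adding the two, the bulk contribution is exactly $-\int_\Omega\big(\varrho\,\nabla\cdot[\nabla H(\nabla\varphi)]-g\,\nabla\cdot[\nabla H(-\nabla\psi)]\big)\diff x$, which is $\ge0$ by Lemma~\ref{lemme5grad}; so it remains to prove that the boundary term $\int_{\partial\Omega}\big(\varrho\,(\nabla H(\nabla\varphi)\cdot\nu)-g\,(\nabla H(-\nabla\psi)\cdot\nu)\big)\diff\mathcal{H}^{d-1}$ is nonnegative.

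For this last point, fix a point $x_0\in\partial\Omega$ where the interior trace $p$ of $\nabla\varphi$ exists ($\mathcal{H}^{d-1}$-a.e.); from the defining property of the trace one can choose $x_k\to x_0$ inside $\Omega$ with $\varphi$ differentiable at $x_k$ and $\nabla\varphi(x_k)\to p$, and there first-order optimality in the $c$-transform gives $\nabla\varphi(x_k)=\nabla h(x_k-T(x_k))$ with $T(x_k)\in\bar\Omega$. Passing to a subsequence along which $T(x_k)\to y_0\in\bar\Omega$, we obtain $p=\nabla h(x_0-y_0)$, a nonnegative multiple of $x_0-y_0$; hence the interior trace of $\nabla H(\nabla\varphi)$ at $x_0$, equal to $\nabla H(p)=a(|p|)p$, is again a nonnegative multiple of $x_0-y_0$, and the supporting hyperplane of $\Omega$ at $x_0$ yields $(x_0-y_0)\cdot\nu(x_0)\ge0$. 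Thus $(\nabla H(\nabla\varphi)\cdot\nu)\ge0$ $\mathcal{H}^{d-1}$-a.e. on $\partial\Omega$; running the same argument for $\psi$, with $-\nabla\psi(y)=\nabla h(S(y)-y)$ and $S(y)\in\bar\Omega$, gives $(\nabla H(-\nabla\psi)\cdot\nu)\le0$, and since $\varrho,g\ge0$ the boundary term is nonnegative, which completes the proof. I expect this identification of the interior normal trace to be the main obstacle: one must check carefully that the trace of the $BV$ field $\nabla H(\nabla\varphi)$ on $\partial\Omega$ is captured by limits of the pointwise expressions $\nabla h(x-T(x))$ with $T(x)$ constrained to $\bar\Omega$, the convexity of $\Omega$ being precisely what then turns this into the sign condition $(x_0-y_0)\cdot\nu\ge0$.
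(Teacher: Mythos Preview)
Your proposal is correct and follows essentially the same route as the paper: integrate by parts using the BV trace theorem, bound the bulk term by Lemma~\ref{lemme5grad}, and show the boundary integrand has the right sign via the radial structure $\nabla H(z)=a(|z|)z$, $\nabla h(z)=b(|z|)z$ together with the convexity inequality $(x_0-y_0)\cdot\nu(x_0)\ge0$. The only notable difference is in how the trace on $\partial\Omega$ is identified: the paper averages $x-\nabla h^*(\nabla\varphi(x))$ over $B(x_0,r)\cap\Omega$ and lets $r\to0$ (restricting to points where $\varrho(x_0)>0$, the case $\varrho(x_0)=0$ being trivial), whereas you argue along a sequence $x_k\to x_0$ of differentiability points; the averaging argument is a bit more robust since it matches directly the definition of the BV trace, which addresses precisely the obstacle you flag at the end.
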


\begin{proof}
First since $H$ is radially symmetric we have $\nabla H(-\nabla \psi)=-\nabla H(\nabla \psi)$ Using Lemma \ref{lemme5grad} and the trace theorem for BV functions we get
\begin{align*}
    \int_{\Omega} \big( \nabla \varrho& \cdot \nabla H ( \nabla \varphi) + \nabla g \cdot \nabla H (\nabla \psi) \big) \diff x \\
    &= - \int_{\Omega} \big(  \varrho \, \nabla \cdot [\nabla H ( \nabla \varphi)] +  g \, \nabla \cdot [\nabla H (\nabla \psi)] \big) \diff x \\
    & \;  + \int_{\partial \Omega} \big( \varrho \nabla H(\nabla \varphi) \cdot n + g \nabla H (\nabla \psi) \cdot n \big) \diff \mathcal{H}^{d-1} \\
    & \geq  \int_{\partial \Omega} \big( \varrho \nabla H(\nabla \varphi) \cdot n + g \nabla H (\nabla \psi) \cdot n \big) \diff \mathcal{H}^{d-1}.
\end{align*}

Let $x_0 \in \partial \Omega$ be such that $\varrho (x_0) >0$. By continuity of $\varrho$, for small $\varepsilon>0$, there exists $r_0>0$ such that if $x \in B(x_0,r_0) \cap \Omega$, then $\varrho(x) > \varepsilon$. For a.e. $x \in B(x_0,r_0) \cap \Omega$, since $T(x) \in \Omega$ we have $(T(x) - x_0) \cdot n(x_0) \leq 0$ and therefore $(x-\nabla h^* (\nabla \varphi)) \cdot n(x_0) \leq x_0 \cdot n(x_0)$. For $r\leq r_0$, integrating on $B(x_0,r)$ we get
\begin{equation*}
    \fint_{B(x_0,r) \cap \Omega} (x-\nabla h^* (\nabla \varphi)) \cdot n(x_0) \diff x \leq \fint_{B(x_0,r) \cap \Omega} x_0 \cdot n(x_0) \diff x.
\end{equation*}
Taking $r \xrightarrow{} 0$ we get $\nabla h^* (\nabla \varphi(x_0)) \cdot n(x_0) \geq 0 $ for $\mathcal{H}^{d-1}$-a.e. $x_0 \in \partial \Omega$ such that $\varrho(x_0) > 0$. \\

Since $H$ and $h$ are convex and radially symmetric we have $\nabla H(z)=\alpha(z) z $ with $\alpha(z) \geq 0$ and since $\nabla h^* = (\nabla h)^{-1}$ we also have $\nabla h^* (z) = \alpha'(z) z$ hence for $\mathcal{H}^{d-1}$-a.e. $x_0 \in \partial \Omega$ such that $\varrho(x_0) > 0$ we have $\nabla H (\nabla \varphi(x_0)) \cdot n(x_0) \geq 0$. By the same arguments for $g$ and $\nabla \psi$ we have
\begin{equation*}
   \int_{\partial \Omega} \big( \varrho \nabla H(\nabla \varphi) \cdot n + g \nabla H (\nabla \psi) \cdot n \big) \diff \mathcal{H}^{d-1} \geq 0 ,
\end{equation*}
which concludes the proof.
\end{proof}

\begin{rem}
In the above proof the hypothesis that $h$ and $H$ are radially symmetric is only used in proving that $\nabla H$ and $\nabla h^*$ point in the same direction, to deal with the boundary integral. One therefore should be able to only assume that $H(z)=\alpha( h^* (z))$ for some increasing convex function $\alpha$ to get $\nabla H(z)=\alpha' (h^* (z))\nabla h^* (z)$. This however adds the difficulty of finding an adequate replacement for Lemma \ref{lemmacostconv} and proving the ensuing convergence.
\end{rem}

\textbf{Proof of Theorem \ref{5grads}.}

\begin{proof}
We begin by taking $H\in C^2(\R^d)$, smooth densitites $\varrho, g$, and approximating the cost $h$ using Lemma \ref{lemmacostconv} and applying Theorem \ref{th5gradsmooth}. Since $h_\varepsilon \xrightarrow{C^1} h$, by compactness of $\prob (\Omega)$ the sequence $(\gamma_\varepsilon)_\varepsilon =((\id \times T_\varepsilon)_{\#}\varrho)_\varepsilon$ of optimal plans weakly converges up to a subsequence to $\gamma = (\id \times T)_{\#}\varrho \in \prob (\Omega)$ which is optimal for $h$. By uniqueness of such transport maps in fact the whole sequence converges and by Lemma \ref{lemmacvgmeasure} we have $T_\varepsilon \xrightarrow{\varrho} T$. Extracting a subsequence we therefore have $T_\varepsilon \xrightarrow{} T$ $\varrho$-a.e, using that we have $T_\varepsilon (x)=x - \nabla h_\varepsilon ^*(\nabla \varphi_\varepsilon (x))$ and $T (x)=x - \nabla h ^*(\nabla \varphi(x))$ and Lemma \ref{cvginverse} we get that $\nabla \varphi_\varepsilon (x) \xrightarrow{} \nabla \varphi(x)$ $\varrho$-a.e. Doing the same for $g$ and $\psi$ and using that $\nabla H \in L^\infty$, the result is proved by dominated convergence. \\

Next we take $\varrho,g \in W^{1,1}(\Omega)$, and for example by convolution we can find smooth functions $\varrho_\varepsilon, g_\varepsilon$ such that $(\varrho_\varepsilon,g_\varepsilon) \xrightarrow{W^{1,1}(\Omega)}(\varrho,g)$. Extracting a subsequence if necessary, we can assume the convergence to hold a.e. and to have a uniform $L^1$ domination. Normalizing $(\varrho_\varepsilon,g_\varepsilon)$ if necessary we can assume they are probability densities and apply the previous result to get 

\begin{equation}
    \int_{\Omega} \big( \nabla \varrho_\varepsilon \cdot \nabla H ( \nabla \varphi_\varepsilon) + \nabla g_\varepsilon \cdot \nabla H (\nabla \psi_\varepsilon) \big) \diff x \geq 0.
\end{equation}
Using Lemma $\ref{transportcvgpp}$ and up to a subsequence, we have $T_\varepsilon \xrightarrow{}T$ $\varrho$-a.e. which implies just as before that $\nabla \varphi_\varepsilon \xrightarrow{} \nabla \varphi$ $\varrho$-a.e. Therefore for a.e. $x\in \{\varrho >0 \}$, we have $\nabla \varphi_\varepsilon (x) \xrightarrow{} \nabla \varphi (x) $ and, using $\nabla \varrho = 0$ a.e on $\left\{ \varrho =0\right\}$, for a.e. $x \in \{\varrho =0\}$, we have $\nabla \varrho_\varepsilon (x) \xrightarrow{} \nabla \varrho(x)$. In any case, since $\nabla H \in L^\infty$, we have that $\nabla \varrho_\varepsilon (x) \cdot \nabla H ( \nabla \varphi_\varepsilon (x)) \xrightarrow{} \nabla \varrho (x) \cdot \nabla H ( \nabla \varphi (x))$ for a.e. $x \in \Omega$ and dominated convergence gives
\begin{align*}
    \int_{\Omega} \ \nabla \varrho_\varepsilon \cdot \nabla H ( \nabla \varphi_\varepsilon) \diff x 
    \xrightarrow{\varepsilon \xrightarrow{}0} \int_{\Omega}  \  \nabla \varrho \cdot \nabla H ( \nabla \varphi)  \diff x.
\end{align*}
\noindent
Doing the same for $g$ and $\psi$ and adding the integrals gives the result.\\
Finally, approximating a radially symmetric convex function $H \in C^1(\R^d \backslash \{0\})$ with $C^2$ functions with the same properties proves that the result stays true in this case, when using the natural convention $\nabla H(0) = 0$.
\end{proof}

\end{document}